\newcounter{dummy} \numberwithin{dummy}{section}
\newtheorem{theorem}[dummy]{Theorem}
\newtheorem{definition}[dummy]{Definition}
\newtheorem{proposition}[dummy]{Proposition}
\newtheorem{lemma}[dummy]{Lemma}
\newtheorem{remark}[dummy]{Remark}
\newtheorem{example}[dummy]{Example}
\newtheorem{corollary}[dummy]{Corollary}
\DeclareSymbolFont{cyrletters}{OT2}{wncyr}{m}{n}
\DeclareMathSymbol{\Sha}{\mathalpha}{cyrletters}{"58}
\newcommand{\Eleni}[1]{\textcolor{blue}{{\sf (Eleni:} {{#1})}}}
\title[Elliptic Curves with positive rank and no integral points]{Elliptic Curves with positive rank \\ and no integral points}
\author{eleni agathocleous }
\begin{document}
\begin{center}
\maketitle{\textbf{Abstract \footnote[1]{This work has been partially supported by the European Union’s H2020 Programme under grant agreement number ERC-669891, and partially supported with funds from the Ministry of Science, Research and Culture of the State of Brandenburg within the Centre for Quantum Technology and Applications (CQTA).}}}
\end{center}

\tiny
We consider all \emph{odd} fundamental discriminants $D \equiv 2 \bmod 3$ and their mirror discriminants $D' = -3D$, and we study the family of elliptic curves $E_{D'}: y^{2} = x^{3} + 16D'$. We denote by $r_{3}(D)$ and $r_{3}(D')$ the rank of the $3$-part of the ideal class group of $\mathbb{Q}(\sqrt{D})$ and $\mathbb{Q}(\sqrt{D'})$ respectively. We show that every curve in the subfamily of elliptic curves $E_{D'}$ with $r_{3}(D) = r_{3}(D') + 1$ for $D < 0$ (respectively, with $r_{3}(D) = r_{3}(D')$ for $D > 0$) cannot have any integral points, and this is proved unconditionally. By employing results of Satg\'e and by assuming finiteness of the $3$-primary part of their Tate-Shafarevich group, we show that the curves $E_{D'}$ must have odd rank when $D < 0$ and even rank when $D > 0$. This result is particularly interesting for the case of $D < 0$ since every curve $E_{D'}$ with $r_{3}(D) = r_{3}(D') + 1$ has infinitely many rational points - assuming finiteness of the $3$-primary part of their Tate-Shafarevich group - yet no integral points. We obtain an unconditional result on the existence of elliptic curves with non-trivial rank and no integral points, by defining a parametrised family of such curves with no integral points but with a parametrised rational point, which we prove that it is of infinite order.

\

\textbf{AMS Mathematics Subject Classification:} Primary: 11R29, 11G05. 

\

\textbf{Keywords:} Elliptic Curves, Selmer group, Rank, Ideal Class Group, Integral Points.

\ 

\begin{center}
Eleni Agathocleous\\ Deutsches Elektronen-Synchrotron DESY, \\ Platanenallee 6, 15738 Zeuthen, Germany \\ email: eleni.agathocleous@desy.de 
\end{center}

\normalsize

\section{Introduction}\label{intro} 
As it is well known, if an elliptic curve has non-trivial rank, then it has infinitely many rational points. However, as it was proved by Siegel \cite{Sieg}, an elliptic curve can have only finitely many integral points. Siegel's theorem is not effective and given an elliptic curve, it is in general hard to determine how many integral points this curve might have, or if it has any integral points at all. Other techniques that have been developed over the years in order to study this problem, concentrate on finding bounds for the number of these integral points. Some of these bounds depend for example on the rank of the elliptic curve, the number of distinct prime divisors of the discriminant, or the primes of bad multiplicative reduction (e.g. \cite{BhaEtAl}, \cite{HelVen}, \cite{Sil} and references therein). 

In this paper we study the family of $j$-invariant zero elliptic curves $E_{D'}: y^{2} = x^{3} + 16D'$, where $D'=-3D$ and $D$ is any \emph{odd} fundamental discriminant equivalent to $2$ modulo $3$. The basic definitions and important properties of these curves are given in Section~\ref{DefPr}. 

In Section~\ref{SelmerRank}, by employing results of Satg\'e \cite{Satge} and by assuming finiteness of the $3$-primary part of their Tate-Shafarevich group, we obtain the following conditional result on the parity of the rank of the elliptic curves $E_{D'}$:

\begin{proposition}
\label{proposition1}
For every odd fundamental discrimant $D \equiv 2 \bmod 3$ denote by $D'$ its mirror discriminant
$D'=-3D$ and let
\[
E_{D'} : y^2 = x^3 + 16D'
\]
be the corresponding elliptic curve of $j$-invariant zero. Assume that the $3$-primary part of
the Tate--Shafarevich group is finite. Then
\[
\operatorname{rank} E_{D'}(\mathbb Q) \equiv
\begin{cases}
1 \bmod 2 &\text{if $D < 0$,} \\
0 \bmod 2 &\text{if $D > 0$.} \\
\end{cases}
\]
\end{proposition}

In Section~\ref{Escalatory} we use the connection between the points of $E_{D'}(\mathbb{Q})$ and the so-called $3$-virtual units of $\mathbb{Q}(\sqrt{D'})$, via the Fundamental $3$-Descent Map, and we prove the following \emph{unconditional} result regarding the set of integral points $E_{D'}(\mathbb{Z})$. 
\begin{proposition}
\label{proposition2}
For every odd fundamental discrimant $D \equiv 2 \bmod 3$ denote by $D'=-3D$ its mirror discriminant and let
\[
E_{D'} : y^2 = x^3 + 16D'
\]
be the corresponding elliptic curve of $j$-invariant zero. Let $r_{3}(D)$ and $r_{3}(D')$ denote the rank of the $3$-part of the ideal class group of $\mathbb{Q}(\sqrt{D})$ and $\mathbb{Q}(\sqrt{D'})$ respectively. Assume that 
\[\begin{cases}
r_{3}(D)=r_{3}(D') + 1, \ \text{if} \ D < 0 \\
r_{3}(D)=r_{3}(D'), \ \text{if} \ D > 0. 
\end{cases}\]
Then
\[
E_{D'}(\mathbb{Z}) = \emptyset.
\]
\end{proposition}

The results from Propositions \ref{proposition1} and  \ref{proposition2} combined, yield a particularly interesting family of elliptic curves $E_{D'}$ in the case of negative discriminants $D<0$ since, whenever $r_{3}(D)=r_{3}(D') + 1$, they imply that the curves $E_{D'}$ have infinitely many rational points but no integral points. This result is conditional, relying on the assumption of finiteness of the $3$-primary part of the Tate-Shafarevich group. We obtain an unconditional result on the existence of such curves in Section~\ref{Example}. 

Particularly, in Section~\ref{Example} we define the parametrised negative integer 
$$D(w) = -(2^{4}3^{7}w^{2} + (2+2^{4}3^{7})w + 2^{2}3^{7}+1)$$ 
which yields the rational point 
$$P(w) = (1/9 , (2^{4}3^{7}w+2^{3}3^{7}+1)/27) \in E_{D(w)'}(\mathbb{Q}), \ \text{for any integer}\ w.$$ 
Now, for $w \equiv 9 \bmod 12$ these integers always satisfy $D(w) \equiv 2 \bmod 3$, and we prove the following lemma:
\begin{lemma}
With notation as above, the following set of odd negative fundamental discriminants is infinite:
$$\mathcal{D}_{w} = \{ D(w) \ | \ w \equiv 9 \bmod 12 \ \text{and} \ D(w) \ \text{squarefree} \}.$$   
\end{lemma}
Given any such discriminant $D(w) \in \mathcal{D}(w)$, we prove that the corresponding curve $E_{D(w)'}$ has positive rank, by showing the following:
\begin{proposition}\label{prop:InfOrder}
For every $D(w) \in \mathcal{D}(w)$, consider the point $$P(w) = (1/9 , y(w)/27), \ \text{with} \ y(w) = 2^{4}3^{7}w+2^{3}3^{7}+1.$$
The point $P(w)$ belongs to the set $E_{D(w)'}(\mathbb{Q}) \setminus \hat{\phi}(E_{D(w)}(\mathbb{Q}))$, and therefore the corresponding elliptic curve $E_{D(w)'}/\mathbb{Q}$ has positive rank.
\end{proposition}
It then follows that every discriminant $D(w)$ from the set $\mathcal{D}_{w}^{esc}$ below
$$\mathcal{D}_{w}^{esc} \coloneqq \{ D(w) \in \mathcal{D}_{w} \ | \ r_{3}(D(w)) = r_{3}(D(w)') + 1 \}$$ 
corresponds to an elliptic curve $E_{D(w)'}$ that has infinitely many rational points, yet no integral points. This result is unconditional and the set $\mathcal{D}_{w}^{esc}$ is of course not empty. A straightforward search in PARI/GP for $1 \leq w \leq 10^{5}$ with $w \equiv 9 \bmod 12$ and $D(w)$ squarefree yielded $4,842$ such discriminants $D(w)$ that lie within the bounds
$$D(9) = -3158047 \leq D(w) \leq D(99993) = -349874512078399.$$

The case of imaginary quadratic number fields $\mathbb{Q}(\sqrt{D})$ with $r_{3}(D) = r_{3}(D')+1$, known as \emph{escalatory} case, is a consequence of a property of the units $E$ of the field $L=\mathbb{Q}(\sqrt{D},\sqrt{D'})$. More specifically, if the orthogonal idempotent associated with the norm from $L$ down to $\mathbb{Q}(\sqrt{D'})$, evaluated at $E/E^{3}$ does not vanish modulo cube powers of the units of $\mathbb{Q}(\sqrt{D'})$, then it `adds one' to the $3$-rank of $Cl(\mathbb{Q}(\sqrt{D}))$ - hence the escalatory case. For a proof of this, the interested reader may refer to \cite[Theorem 10.10]{Wash}.

\section{Definitions and Preliminaries} \label{DefPr}
We denote by $D$ all \emph{odd} fundamental discriminants with $D \equiv 2 \bmod 3$, and by $D' = -3D$ their mirror discriminants, which are also odd and fundamental. Hence, the odd squarefree integers $D$ and $D'$ satisfy the following congruence conditions
\begin{equation} \label{conditionsD} D, D' \equiv 1 \bmod 4 \ \text{and} \ D \equiv 2 \bmod 3,\end{equation}
yielding further that $$D \equiv 5 \bmod 12 \ \text{and} \ D' \equiv 9 \bmod 12.$$

We denote by $K_{D}$ and $K_{D'}$ the quadratic number fields $K_{D} = \mathbb{Q}(\sqrt{D})$ and $K_{D'} = \mathbb{Q}(\sqrt{D'})$. We call the field $K_{D'}$, the quadratic resolvent of $K_{D}$. 

For any number field $M/\mathbb{Q}$ we denote by $\overline{M}$ its algebraic closure, and by $E(M)$ the group of points of any elliptic curve $E$ defined over $M$. For the theory and proofs of the facts that we present below, the interested reader may refer for example to \cite[Chapter IV]{CassFr}, \cite{Satge}, \cite[Chapter X.4 and Appendix B]{Silverman}, and \cite{Top}. 
  
We consider the family of $j$-invariant zero elliptic curves
\begin{equation} E_{D'}: y^2 = x^3 + 16D'. \end{equation}
The torsion group $$\mathcal{T}_{D'} = \{ O, (0,\pm 4 \sqrt{D'}) \} \subseteq E_{D'}(\mathbb{\overline{Q}})$$ 
is a subgroup of $E_{D'}(\mathbb{\overline{Q}})$ of order $3$ and is invariant under the action of $G_{\mathbb{Q}} = Gal(\overline{\mathbb{Q}}/\mathbb{Q})$. The quotient of $E_{D'}$ over $\mathcal{T}_{D'}$ defines a family of isogenous elliptic curves given by the equation
\begin{equation} E_{D}: Y^{2} = X^{3} + 16\cdot3^{4}D . \end{equation}
The $3$-torsion group for the curves $E_{D}$, invariant under the action of $G_{\mathbb{Q}}$, is
$$\mathcal{T}_{D} =   \{ O,(0,\pm 36 \sqrt{D}) \} \subseteq E_{D}(\overline{\mathbb{Q}}).$$ 
Let $\phi$ denote the rational $3$-isogeny $$\phi : E_{D'} \rightarrow E_{D} $$ with kernel $\text{ker}_{\phi}(\mathbb{\overline{Q}}) = \mathcal{T}_{D'}$. The isogeny $\phi$ is given by the rational maps (\cite[Proposition 8.4.3]{Cohen})
\begin{equation} \label{themap} \phi((x , y)) = \Big{(} \frac{x^{3} +2^{6}D'}{x^{2}} , \frac{y(x^3 - 2^{7}D')}{x^{3}} \Big{)}.\end{equation}
The dual isogeny 
$$\hat{\phi}: E_{D} \rightarrow E_{D'}$$ 
has kernel $\text{ker}_{\hat{\phi}}(\mathbb{\overline{Q}}) = \mathcal{T}_{D}$ and is given by the rational maps (\cite[Proposition 8.4.3]{Cohen})
\begin{equation} \label{themapdual} \hat{\phi}((X , Y)) = \Big{(} \frac{X^{3} - 2^{6}3^{3}D'}{9X^{2}} , \frac{Y(X^3 + 2^{7}3^{3}D')}{27X^{3}} \Big{)}.\end{equation}
The isogenies $\phi$ and $\hat{\phi}$ satisfy $\phi\circ\hat{\phi} = [3]_{D}$ and $\hat{\phi}\circ\phi = [3]_{D'}$, where $[3]_{D}$ and $[3]_{D'}$ are the multiplication-by-$3$ maps on $E_{D}$ and $E_{D'}$ respectively. 

Consider the exact sequence (\cite[Section X.4, Remark 4.7]{Silverman}) 
\begin{equation} \label{splitrank} 0 \rightarrow E_{D}(\mathbb{Q})[\hat{\phi}]/\phi(E_{D'}(\mathbb{Q})[3]) \rightarrow E_{D}(\mathbb{Q})/\phi(E_{D'}(\mathbb{Q})) \xrightarrow{\hat{\phi}} \end{equation} 
$$\xrightarrow{\hat{\phi}} E_{D'}(\mathbb{Q})/3E_{D'}(\mathbb{Q}) \rightarrow E_{D'}(\mathbb{Q})/\hat{\phi}(E_{D}(\mathbb{Q})) \rightarrow 0 .$$
Since both $\text{ker}_{\phi}(\mathbb{\overline{Q}}) = \mathcal{T}_{D'}$ and $\text{ker}_{\hat{\phi}}(\mathbb{\overline{Q}}) = \mathcal{T}_{D}$ contain no non-trivial rational point of order $3$, the first quotient group of (\ref{splitrank}) vanishes and the rank $r(E_{D'})$ of $E_{D'}(\mathbb{Q})$ equals
\begin{equation}\label{R} \begin{gathered} r(E_{D'}) = \text{dim}_{\mathbb{F}_{3}}(E_{D'}(\mathbb{Q})/3E_{D'}(\mathbb{Q})) =\\  \text{dim}_{\mathbb{F}_{3}}(E_{D}(\mathbb{Q})/\phi(E_{D'}(\mathbb{Q}))) + \text{dim}_{\mathbb{F}_{3}}(E_{D'}(\mathbb{Q})/\hat{\phi}(E_{D}(\mathbb{Q})) ).\end{gathered} \end{equation} 

Consider now the short exact sequence \begin{equation}\label{lisogeny} 0 \rightarrow \mathcal{T}_{D'} \rightarrow E_{D'}(\mathbb{\overline{Q}}) \xrightarrow{\phi} E_{D}(\mathbb{\overline{Q}}) \rightarrow 0.\end{equation} From this, we obtain the long exact cohomology sequence which gives in particular the following 
\begin{equation}\label{quotient} 0 \rightarrow E_{D}(\mathbb{Q})/\phi(E_{D'}(\mathbb{Q}))  \xrightarrow{\delta}  H^{1}(G_{\mathbb{Q}},\mathcal{T}_{D'}) \rightarrow H^{1}(G_{\mathbb{Q}}, E_{D'}(\mathbb{\overline{Q}}))[\phi] \rightarrow 0.\end{equation} 
By localising at each prime $p$, we obtain the following commutative diagram, where $res_{p}$ is the usual restriction map: 
\small
\begin{center}
\begin{tikzcd}[ampersand replacement=\&, column sep=tiny]
        0 \arrow[r] \&  E_{D}(\mathbb{Q})/\phi(E_{D'}(\mathbb{Q})) \arrow[r, "\delta"] \arrow[d]  \& H^{1}(G_{\mathbb{Q}},\mathcal{T}_{D'}) \arrow[r]  \arrow[d, "\underset{p}{\prod}res_{p}"] \& H^{1}(G_{\mathbb{Q}}, E_{D'}(\mathbb{\overline{Q}}))[\phi]  \arrow[r] \arrow[d, "\underset{p}{\prod}res_{p}"]  \& 0 \\  
        0 \arrow[r] \&  \underset{p}{\prod}E_{D}(\mathbb{Q}_{p})/\phi(E_{D'}(\mathbb{Q}_{p})) \arrow[r]  \& \underset{p}{\prod}H^{1}(G_{\mathbb{Q}_{p}},\mathcal{T}_{D'}) \arrow[r]  \& \underset{p}{\prod}H^{1}(G_{\mathbb{Q}_{p}}, E_{D'}(\mathbb{\overline{Q}}_{p}))[\phi]  \arrow[r]  \& 0 
    \end{tikzcd}
\end{center}    
\normalsize  

\begin{definition}\label{definition} The Selmer group of $E_{D'}$ relative to the isogeny $\phi$ is $$\mathcal{S}_{\phi}(E_{D'}) = \{ x \in H^{1}(G_{\mathbb{Q}},\mathcal{T}_{D'}) \ | \ res_{p}(x) \in \emph{Im}( E_{D}(\mathbb{Q}_{p})/\phi(E_{D'}(\mathbb{Q}_{p}))) \ \text{for all p}\}.$$

The Tate-Shafarevich group of $E_{D'}$ can now be defined as $$\Sha(E_{D'}) = \{x \in H^{1}(G_{\mathbb{Q}}, E_{D'}(\mathbb{\overline{Q}})) \  | \ res_{p}(x) = 0 \ \text{for all} \ p\}.$$

These two groups are connected together as follows: 
\begin{equation} \label{SelSha} 0 \rightarrow E_{D}(\mathbb{Q})/\phi(E_{D'}(\mathbb{Q})) \rightarrow \mathcal{S}_{\phi}(E_{D'}) \rightarrow \Sha(E_{D'})[\phi] \rightarrow 0.\end{equation} \end{definition}

\begin{remark}\label{dualisogenydefn} By considering the dual isogeny $\hat{\phi}$ instead, we obtain exact sequences analogous to (\ref{lisogeny}), (\ref{quotient}) and (\ref{SelSha}), which in turn give us the analogous definitions for $\mathcal{S}_{\hat{\phi}}(E_{D})$,  
$\Sha(E_{D})$ and $\Sha(E_{D})[\hat{\phi}]$. \end{remark}

\begin{remark}\label{SelSha3} We also obtain exact sequences analogous to (\ref{lisogeny}), (\ref{quotient}) and (\ref{SelSha}) by considering the map $[3]_{D} = \phi\circ\hat{\phi}$ (respectively $[3]_{D'} = \hat{\phi}\circ\phi$), which in turn give us the analogous definitions for $\mathcal{S}_{3}(E_{D'})$,  $\Sha(E_{D'})$ and $\Sha(E_{D'})[3]$ (respectively $\mathcal{S}_{3}(E_{D'})$,  $\Sha(E_{D'})$ and $\Sha(E_{D'})[3]$). \end{remark}


\section{On the $3$-Selmer group and rank of the elliptic curves $E_{D'}$} \label{SelmerRank} 
As above, we let $D$ be any odd fundamental discriminant $D \equiv 2 \bmod 3$, and we let $r(\mathcal{S}_{\phi}(E_{D'}))$ and $r(\mathcal{S}_{\hat{\phi}}(E_{D}))$ denote the rank, as $\mathbb{F}_{3}$-vector spaces, of the Selmer groups $\mathcal{S}_{\phi}(E_{D'})$ and $\mathcal{S}_{\hat{\phi}}(E_{D})$, relative to the isogenies $\phi$ and $\hat{\phi}$. We denote by $r_{3}(D)$ and $r_{3}(D')$ the rank of the $3$-part of the ideal class group $Cl(K_{D})$ and $Cl(K_{D'})$ of $K_{D}$ and $K_{D'}$ respectively. By employing the results of Satg\'e \cite[Section 3]{Satge}, we compute below the precise rank for the Selmer groups $\mathcal{S}_{\phi}$ and $\mathcal{S}_{\hat{\phi}}$, in terms of the ranks $r_{3}(D)$ and $r_{3}(D')$.

\begin{proposition}\label{Selmer} For all \emph{odd} fundamental discriminants $D \equiv 2 \bmod 3$, with mirror discriminant $D' = -3D$, the rank of the Selmer groups $\mathcal{S}_{\phi}(E_{D'})$ and $\mathcal{S}_{\hat{\phi}}(E_{D})$ of the curves $E_{D'}$ and $E_{D}$ are as follows: 
\begin{equation*} r(\mathcal{S}_{\phi}(E_{D'})) = r_{3}(D') \ \text{and} \ r(\mathcal{S}_{\hat{\phi}}(E_{D})) = r_{3}(D') + 1, \ \text{for} \ D < -4\end{equation*}  
\begin{equation*} r(\mathcal{S}_{\phi}(E_{D'})) = r(\mathcal{S}_{\hat{\phi}}(E_{D})) = r_{3}(D'), \ \text{for} \ D > 4.\end{equation*} 
\end{proposition}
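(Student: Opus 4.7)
The plan is to apply Satg\'e's explicit 3-isogeny descent calculations (\cite{Satge}, Section 3) to the curves $y^2 = x^3 + 16D'$ with $D' \equiv 9 \bmod 12$, and read off the $\mathbb{F}_3$-dimensions of the two Selmer groups in terms of $r_3(D')$.

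Since $\mathcal{T}_{D'}$ is a Galois module of order $3$ defined over $K_{D'}$, and $\mathrm{Gal}(K_{D'}/\mathbb{Q})$ has order $2$ (coprime to $3$), inflation-restriction together with the fact that the nontrivial element $\sigma \in \mathrm{Gal}(K_{D'}/\mathbb{Q})$ acts on $\mathcal{T}_{D'}$ by inversion (it sends $(0, 4\sqrt{D'})$ to its negative on the curve) gives an isomorphism
$$H^1(G_\mathbb{Q}, \mathcal{T}_{D'}) \;\cong\; H^1(G_{K_{D'}}, \mathcal{T}_{D'})^{-},$$
the $\sigma$-anti-invariant part. Under Kummer theory over $L = K_{D'}(\mu_3) = K_D \cdot K_{D'}$, this identifies the target of the descent map $\delta$ with a subgroup of $L^\times/L^{\times 3}$ cut out by explicit Galois-equivariance conditions. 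Satg\'e carries out this identification concretely and shows that the Selmer condition $\mathrm{res}_p(x) \in \mathrm{Im}(\delta_p)$ becomes, on the level of representatives $\alpha$, the requirement that $(\alpha) = \mathfrak{a}^3$ as fractional ideals together with prescribed local congruences at the places above $2$, $3$, and $\infty$. An entirely parallel analysis applies to $\mathcal{S}_{\hat\phi}(E_D)$.

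The computation then reduces to counting via the standard exact sequence
\begin{equation*}
0 \to \mathcal{O}_F^\times/\mathcal{O}_F^{\times 3} \to \{\alpha \in F^\times/F^{\times 3} : (\alpha) \text{ is a cube}\} \to Cl(F)[3] \to 0,
\end{equation*}
intersected with the Selmer local conditions, for the appropriate quadratic field $F$. The congruence hypotheses $D \equiv 5 \bmod 12$ and $D' \equiv 9 \bmod 12$ place us in exactly the regime where Satg\'e's local calculations apply verbatim, and a Scholz-type reflection (implicit in his setup, since cubic fields of quadratic resolvent $K_D$ and those of quadratic resolvent $K_{D'}$ are linked inside $L$) is what allows both Selmer ranks to be expressed uniformly in terms of $r_3(D')$.

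Finally I would split by the sign of $D$. For $D > 4$, $K_{D'}$ is imaginary quadratic with $\mathcal{O}_{K_{D'}}^\times = \{\pm 1\}$ trivial modulo cubes, and Satg\'e's local conditions balance symmetrically between the two isogenies, giving $r(\mathcal{S}_\phi(E_{D'})) = r(\mathcal{S}_{\hat\phi}(E_D)) = r_3(D')$. For $D < -4$, $K_{D'}$ is real quadratic and the fundamental unit contributes a nontrivial $\mathbb{F}_3$-class of 3-virtual units; the local condition at the archimedean place is asymmetric between $\phi$ and $\hat\phi$, so this extra direction survives only in $\mathcal{S}_{\hat\phi}(E_D)$, producing $r(\mathcal{S}_\phi(E_{D'})) = r_3(D')$ and $r(\mathcal{S}_{\hat\phi}(E_D)) = r_3(D') + 1$. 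The main obstacle, in my view, is precisely this local bookkeeping at $2$, $3$ and at $\infty$: one must verify that the congruence conditions on $D$ and $D'$ fit Satg\'e's framework exactly, and correctly attribute the extra unit direction in the real case to $\mathcal{S}_{\hat\phi}(E_D)$ rather than $\mathcal{S}_\phi(E_{D'})$.
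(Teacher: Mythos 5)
Your proposal follows the same route as the paper: both reduce the proposition to Satg\'e's Selmer-group computations in Section 3 of his paper, and your final answers (including attributing the extra dimension for $D<0$ to the fundamental unit of the real quadratic field $K_{D'}$, which enters only $\mathcal{S}_{\hat{\phi}}(E_{D})$) agree with his Propositions 3.2(1) and 3.3(1). The problem is that the step you explicitly defer --- ``verify that the congruence conditions on $D$ and $D'$ fit Satg\'e's framework exactly'' --- is precisely the \emph{entire} content of the paper's proof, so as written your argument has a hole exactly where the work is. The verification is short and you should carry it out rather than assert it: since $D'$ is odd and squarefree, $2^{4} \| 16D'$, so Satg\'e's Lemma 3.1 is vacuously satisfied; since $\gcd(D,3)=1$ and $D \equiv 2 \bmod 3$, we have $3 \| 16D' = -48D$ with $-16D \equiv 1 \bmod 3$, which is the hypothesis of his Proposition 3.2(1) and yields $r(\mathcal{S}_{\phi}(E_{D'})) = r_{3}(D')$; finally his Proposition 3.3(1) gives $r(\mathcal{S}_{\hat{\phi}}(E_{D})) = r_{3}(D') + 1$ when $16D' > 0$ (i.e.\ $D<0$) and $r(\mathcal{S}_{\hat{\phi}}(E_{D})) = r_{3}(D')$ when $16D' < 0$ (i.e.\ $D>0$).

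One further caution about your bookkeeping in the last paragraph: the two Selmer groups do not live in the same quadratic field with different archimedean conditions. As Galois modules, $\ker\phi = \mathcal{T}_{D'} \cong \mu_{3} \otimes \chi_{D}$ and $\ker\hat{\phi} = \mathcal{T}_{D} \cong \mu_{3} \otimes \chi_{D'}$, so $\mathcal{S}_{\phi}(E_{D'})$ sits naturally inside (the norm-kernel of) $K_{D}^{\times}/(K_{D}^{\times})^{3}$, while $\mathcal{S}_{\hat{\phi}}(E_{D})$ sits inside (the norm-kernel of) $K_{D'}^{\times}/(K_{D'}^{\times})^{3}$; the latter is visible in the paper's descent map $\Psi$, whose kernel is $\hat{\phi}(E_{D}(\mathbb{Q}))$ and whose target is built from $K_{D'}$. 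Consequently, for $D<0$ the unit contribution appears in $\mathcal{S}_{\hat{\phi}}(E_{D})$ simply because that is the descent performed in the real field $K_{D'}$, not because of an asymmetric condition at infinity imposed on one and the same field. Likewise, for $D>0$ your explanation that both ranks equal $r_{3}(D')$ ``because $K_{D'}$ is imaginary'' only accounts for $\mathcal{S}_{\hat{\phi}}(E_{D})$: the group $\mathcal{S}_{\phi}(E_{D'})$ is then computed in the \emph{real} field $K_{D}$, and the equality $r(\mathcal{S}_{\phi}(E_{D'})) = r_{3}(D')$ genuinely requires Satg\'e's reflection-type analysis relating the two mirror fields, which is exactly what his Proposition 3.2(1) encapsulates.
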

\begin{proof} Our elliptic curves $E_{D'}$ have a constant term equal to $16D'$. With $D'$ squarefree and with $2^4 || 16D'$, Lemma 3.1 in \cite{Satge} is vacuously true. Now $3 || 16D'$ and, given the congruence condition $D \equiv 2 \bmod 3$, we have that $-16D \equiv 1 \bmod 3$. Therefore, from Proposition 3.2(1) of \cite{Satge} we have that $r(\mathcal{S}_{\phi}(E_{D'})) = r_{3}(D')$. Finally, Proposition 3.3.(1) of \cite{Satge} gives $r(\mathcal{S}_{\hat{\phi}}(E_{D})) = r_{3}(D') + 1$ when $16D' > 0$, i.e. when $D$ is negative, and $r(\mathcal{S}_{\hat{\phi}}(E_{D})) = r_{3}(D')$ when $16D' < 0$, i.e. when $D$ is positive. \end{proof}

As in Remark~\ref{SelSha3}, we denote by $\mathcal{S}_{3}(E_{D})$ and $\mathcal{S}_{3}(E_{D'})$ the $3$-Selmer group of the corresponding elliptic curves. Their rank will be denoted by $r(\mathcal{S}_{3}(E_{D}))$ and $r(\mathcal{S}_{3}(E_{D'}))$ respectively. We now consider the exact sequence (\cite[Corollary 1]{Kloo})
\footnotesize \begin{equation} \label{SelmerSha} 0 \rightarrow \frac{E_{D}(\mathbb{Q})[\hat{\phi}]}{\phi(E_{D'}(\mathbb{Q})[3])} \rightarrow \mathcal{S}_{\phi}(E_{D'}) \rightarrow  \mathcal{S}_{3}(E_{D'}) \rightarrow \mathcal{S}_{\hat{\phi}}(E_{D}) \rightarrow \frac{\Sha(E_{D})[\hat{\phi}]}{\phi(\Sha(E_{D'})[3])} \rightarrow 0 . \end{equation} \normalsize
Since our curves have no rational $3$-torsion points, the first term of (\ref{SelmerSha}) is trivial. On the other hand,  the non-degenerate alternating pairing defined by Cassels in \cite{Cassels}, induces a non-degenerate alternating pairing on the $\mathbb{F}_{3}$-vector space $\frac{\Sha(E_{D})[\hat{\phi}]}{\phi(\Sha(E_{D'})[3])}$, which implies that it is even-dimensional \cite[Proposition 42]{Bhar2}. Therefore, we obtain the following result regarding the parity of the rank of the $3$-Selmer group and the ranks of the two Selmer groups $\mathcal{S}_\phi(E_{D'})$ and $\mathcal{S}_{\hat{\phi}}(E_{D})$:
\begin{equation}\label{parity} r(\mathcal{S}_{3}(E_{D'})) \equiv r(\mathcal{S}_{\phi}(E_{D'})) + r(\mathcal{S}_{\hat{\phi}}(E_{D})) \bmod 2. \end{equation}

\begin{corollary} \label{oddrank} 
We denote by $D$ every odd fundamental discriminant with $D \equiv 2 \bmod 3$, and we let
$D'=-3D$ and
\[
E_{D'} : y^2 = x^3 + 16D'
\]
be the corresponding elliptic curve with $j$-invariant zero. We assume that the $3$-primary part of the Tate-Shafarevich group is finite. Then
\[
r(E_{D'}) \equiv
\begin{cases}
1 \bmod 2 &\text{if $D < 0$,} \\
0 \bmod 2 &\text{if $D > 0$.} \\
\end{cases}
\]
\end{corollary}
\begin{proof} Given Remark~\ref{SelSha3}, the exact sequence analogous to (\ref{SelSha}) is
\begin{equation}\label{eqnSelSha3} 0 \rightarrow E_{D}(\mathbb{Q})/3(E_{D'}(\mathbb{Q})) \rightarrow \mathcal{S}_{3}(E_{D'}(\mathbb{Q})) \rightarrow \Sha(E_{D'}(\mathbb{Q}))[3] \rightarrow 0. \end{equation}
By assuming finiteness of the $3$-primary part of their Tate-Shafarevich group, the non-degenerate alternating pairing on $\Sha(E_{D'})[3]$ defined by Cassels in \cite{Cassels} implies that this group is of even dimension as an $\mathbb{F}_{3}$-vector space, and we thus obtain the congruence relation
\begin{equation}\label{ParityRank} r(E_{D'}) \equiv r(\mathcal{S}_{3}(E_{D'})) \equiv r(\mathcal{S}_{\phi}(E_{D'})) + r(\mathcal{S}_{\hat{\phi}}(E_{D})) \equiv \begin{cases}
1 \bmod 2, \ \text{for} \ D < 0\\
0 \bmod 2,\ \text{for} \ D > 0
\end{cases}.
\end{equation} 
The last two equivalences follow from (\ref{parity}) and Proposition~\ref{Selmer}.
\end{proof}


\section{The Case of No Integral Points }\label{Escalatory}
In this section we will define a subfamily of the curves $E_{D'}$ and show in Proposition~\ref{NoIntegral} that this subfamily cannot have any integral points. This is particularly interesting for the case of $D < 0$ since, by Corollary~\ref{oddrank} (assuming finiteness of the $3$-primary part of the Tate-Shafarevich group), every such curve $E_{D'}$ will have infinitely many rational points but no integral points. An unconditional result for the existence of such curves with non-trivial rank and no integral points is given in Section~\ref{Example}, where we construct a parametrised odd fundamental discriminant $D(w) \equiv 2 \bmod 3$ that guarantees the existence of a rational point of infinite order.     

As we will need to employ known results on cubic extensions of quadratic number fields, we recall below some known facts and definitions for completeness. The interested reader may refer to \cite[Chapter 5]{Cohen1} and \cite[Chapter 4]{Hambleton} for more details, and to \cite{Aga} for a short account.

Let $M$ be any number field with ring of integers $O_{M}$. For any prime $\ell$, Cohen in \cite[Chapter 5]{Cohen1} defines the group $V_{\ell}(M)$ of $\ell$-virtual units of $M$, which give rise to extensions of $M$ of degree $\ell$. The group $V_{\ell}(M)$ is directly related to the known $\ell$-Selmer group $Sel_{\ell}(M)$ of $M$. We cite below \cite[Proposition 5.2.3 and Definition 5.2.4]{Cohen1}, adjusted to our needs for $\ell = 3$.

\begin{proposition} \emph{(\cite[Proposition 5.2.3]{Cohen1})} \label{fact1} Let $\mu \in M^{\times}$. The following two properties are equivalent:

(1) There exists an ideal $\mathfrak{a}$ such that $\mu O_{M} = \mathfrak{a}^3$. 

(2) The element $\mu$ belongs to the group generated by the units, the cube powers of elements of $M^{\times}$ and the cube powers of nonprincipal ideals which become principal when raised to the cube power, hence they belong to the $3$-torsion part $Cl(M)[3]$ of the ideal class group $Cl(M)$ of $M$. \end{proposition}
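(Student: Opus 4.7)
The plan is to prove the two implications by a direct ideal-theoretic argument in the Dedekind domain $O_M$, using the single observation that an ideal $\mathfrak{a}$ has $\mathfrak{a}^3$ principal if and only if its ideal class $[\mathfrak{a}]$ lies in $Cl(M)[3]$.

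For $(1)\Rightarrow(2)$, suppose $\mu O_M = \mathfrak{a}^3$ for some fractional ideal $\mathfrak{a}$ of $M$. Since $\mathfrak{a}^3$ is principal, its class satisfies $[\mathfrak{a}]\in Cl(M)[3]$. If $\mathfrak{a}$ happens to be principal, write $\mathfrak{a}=\alpha O_M$; then $\mu O_M = \alpha^3 O_M$, so $\mu$ and $\alpha^3$ generate the same principal ideal and therefore differ by a unit, giving $\mu = u\alpha^3$ with $u\in O_M^{\times}$. If $\mathfrak{a}$ is non-principal, then by definition $\mu$ is already a generator of the cube of a non-principal ideal whose class is $3$-torsion. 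Either way, $\mu$ lies in the subgroup of $M^{\times}$ described in $(2)$.

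For $(2)\Rightarrow(1)$, any $\mu$ in that subgroup can be expressed as $\mu = u\cdot \beta^3 \cdot \gamma_1\cdots\gamma_k$, where $u\in O_M^{\times}$, $\beta\in M^{\times}$, and each $\gamma_i$ generates a principal ideal of the form $\mathfrak{b}_i^3$ with $[\mathfrak{b}_i]\in Cl(M)[3]$. Passing to fractional ideals kills the unit and gives
\[
\mu O_M \;=\; (\beta O_M)^3\cdot \mathfrak{b}_1^3\cdots\mathfrak{b}_k^3 \;=\; \bigl(\beta O_M\cdot \mathfrak{b}_1\cdots\mathfrak{b}_k\bigr)^3,
\]
so $\mathfrak{a} := \beta O_M\cdot \mathfrak{b}_1\cdots\mathfrak{b}_k$ witnesses $(1)$.

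I do not expect any genuine obstacle: the content is routine bookkeeping with the ideal class group, and no hypothesis on $M$ beyond being a number field is used. The one point requiring care is semantic, namely to parse the phrase in $(2)$ about "cube powers of nonprincipal ideals" as referring to \emph{generators} of those (automatically principal) cubes, since $\mu\in M^{\times}$ rather than in the group of fractional ideals; with that reading fixed, the two directions are immediate from the definition of the class group and the fact that two generators of the same principal ideal differ by a unit.
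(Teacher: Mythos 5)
The paper does not prove this proposition at all: it is quoted verbatim (adjusted to $\ell=3$) from Cohen \cite[Proposition 5.2.3]{Cohen1}, so there is no internal proof to compare yours against. Your argument is correct, and it is the standard one -- the same routine class-group bookkeeping Cohen's proof carries out: a cube $\mathfrak{a}^3$ is principal exactly when $[\mathfrak{a}]\in Cl(M)[3]$, and two generators of the same principal ideal differ by a unit. Two small remarks: your reading of (2) (the generators are \emph{elements generating} the principal cubes $\mathfrak{b}^3$, not the ideals themselves) is indeed the only sensible parsing of the statement; and in $(2)\Rightarrow(1)$ an element of the generated group may also involve inverses $\gamma_i^{-1}$, but these are absorbed since $\gamma_i^{-1}$ generates $(\mathfrak{b}_i^{-1})^3$ and $[\mathfrak{b}_i^{-1}]\in Cl(M)[3]$; likewise your witnessing $\mathfrak{a}$ is in general a fractional ideal, which is the correct reading of (1), since $\mu\in M^\times$ need not lie in $O_M$.
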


\begin{definition}\label{def3unit}\emph{(\cite[Definition 5.2.4]{Cohen1})} \\ (1) An element $\mu \in M^{\times}$ satisfying one of the two equivalent conditions of the above proposition will be called a $3$-virtual unit. \\ (2) The set of $3$-virtual units forms a multiplicative group which we denote by 
$V_{3}(M)$. \\ (3) The quotient group $Sel_{3}(M) \coloneqq V_{3}(M)/{M^{\times}}^{3}$ will be called the $3$-Selmer group of $M$. \end{definition}
The following short exact sequence \cite[Section 5.2]{Cohen} shows the relation between the $3$-Selmer group $Sel_{3}(M) = V_{3}(M)/{M^{\times}}^{3}$ of $M$ and the $3$-part of its ideal class group and its group of units: 
\begin{equation}\label{Selmershort} 1 \rightarrow \frac{U_{M}}{{U_{M}}^{3}} \rightarrow Sel_{3}(M) \xrightarrow{\Phi} Cl(M)[3] \rightarrow 1. \end{equation}
We notice that the pre-image of $\Phi$ is unique, up to a unit. Of course, in the case that $M$ is an imaginary quadratic number field of fundamental discriminant $\Delta < -4$, there are no units other than $\pm 1$ and $\Phi$ is an isomorphism.

\begin{definition} \label{Primitive} Given an order $\mathcal{O}$, we say that a proper $\mathcal{O}$-ideal $\mathfrak{a}$ is primitive if it is not of the form $k\mathfrak{a}$ for $1 < k \in \mathbb{Z}$ and $\mathfrak{a}$ a proper ideal (\cite[\S11.D, pg.214]{Cox}). We will call an element 
$\pi$ primitive if $(\pi)$ is a primitive ideal.\end{definition} 

It is known that every cubic field $L$ of fundamental discriminant $\Delta$ arises from a $3$-virtual unit in the quadratic resolvent $K_{\Delta'} = \mathbb{Q}(\sqrt{\Delta'})$ of the quadratic number field $K_{\Delta} = \mathbb{Q}(\sqrt{\Delta}) \subseteq L(\sqrt{\Delta})$, with $Gal(L(\sqrt{\Delta})/\mathbb{Q})$ isomorphic to the symmetric group on $3$ elements. The discriminant $\Delta' \coloneqq -3\Delta/\text{gcd}(3,\Delta)^{2}$ is also a fundamental discriminant. As it is proved for example in \cite[Lemma 4.2]{Hambleton}, every such cubic field $L$ is generated by an irreducible cubic polynomial of the form
$$f_{\mu}(x) = x^{3} - 3(\mu \overline{\mu})^{1/3}x + (\mu+\overline{\mu}) \in \mathbb{Z}[x],$$ 
where $\mu \in \mathcal{O}_{\Delta'} \setminus \mathcal{O}_{\Delta'}^{3}$ is a primitive $3$-virtual unit, with $\mathcal{O}_{\Delta'}$ being the maximal order of $K_{\Delta'}$. Two $3$-virtual units $\mu_{1} = \mathfrak{a}_{1}^{3}$ and $\mu_{2}= \mathfrak{a}_{2}^{3}$ are generators of the same field, up to conjugation, if and only if $\mu_{1}$ or $\overline{\mu_{1}}$ is equal to $\alpha^{3} \mu_{2}$, for some $\alpha \in K_{\Delta'}^{\times}$ \cite[Theorem 4.3]{Hambleton}. Finally, we say that the polynomial $f_{\mu}$ is in \emph{standard form} if there is no integer $c \neq \pm 1$ such that $c^{2} | 3(\mu \overline{\mu})^{1/3}$ and $c^{3} | (\mu+\overline{\mu})$.

A classical result known as Scholz's Reflection Theorem, gives us the relation between the $3$-ranks of the ideal class groups of the quadratic number fields $K_{\Delta}$ and $K_{\Delta'}$ (the interested reader may refer to \cite[Section 10.2]{Wash} for more details):
\begin{theorem}\label{Scholz} \textbf{Reflection Theorem of Scholz} \\
Let $\Delta$ and $\Delta' \coloneqq -3\Delta/\text{gcd}(3,\Delta)^{2}$ be fundamental discriminants. Let $r_{3}(real)$ denote the $3$-rank of the ideal class group of the real quadratic number field of discriminant $\Delta$ (or $\Delta'$), and let $r_{3}(imaginary)$ denote the $3$-rank of the ideal class group of the imaginary quadratic number field of discriminant $\Delta'$ (or $\Delta$). Then the following holds true
\begin{equation*}r_{3}(real) \leq r_{3}(imaginary) \leq r_{3}(real) + 1. \end{equation*} \hfill $\square$  \end{theorem}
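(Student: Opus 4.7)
The plan is to prove Scholz's reflection theorem via Kummer theory over the biquadratic compositum $F := K_{\Delta}(\zeta_{3}) = K_{\Delta'}(\zeta_{3})$. This compositum contains both $K_{\Delta}$ and $K_{\Delta'}$, since multiplying $\sqrt{\Delta}$ by $\sqrt{-3}$ produces, up to rationals, $\sqrt{\Delta'}$. The guiding idea is to read off both $r_{3}(\Delta)$ and $r_{3}(\Delta')$ from a single Kummer group on $F$, decomposed according to the action of the nontrivial element $\tau$ of $Gal(\mathbb{Q}(\zeta_{3})/\mathbb{Q})$, lifted to $Gal(F/\mathbb{Q})$.

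First I would use class field theory to identify $r_{3}(K)$ for $K \in \{K_{\Delta}, K_{\Delta'}\}$ with the $\mathbb{F}_{3}$-dimension of the space of everywhere-unramified cyclic cubic extensions of $K$. Lifting each such extension to $F$ and invoking Kummer theory (which is available because $\zeta_{3} \in F$) produces a subgroup $V \subseteq F^{\times}/(F^{\times})^{3}$ consisting of classes $\alpha$ whose Kummer extension $F(\sqrt[3]{\alpha})/F$ is unramified outside the unique prime above $3$. The descent condition back to $K_{\Delta}$ or $K_{\Delta'}$ translates, using $\tau(\zeta_{3}) = \zeta_{3}^{-1}$, into $\alpha$ lying in one of the two eigenspaces of $\tau$. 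Since $\tau$ has order $2$, which is invertible in $\mathbb{F}_{3}$, one obtains a clean splitting $V = V^{+} \oplus V^{-}$ in which one summand has dimension $r_{3}(\Delta)$ and the other dimension $r_{3}(\Delta')$.

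The main obstacle, and the source of the $+1$ in the inequality, is the careful local-global bookkeeping at the prime above $3$ together with Dirichlet's unit theorem applied in $F$. Exactly one of $K_{\Delta}, K_{\Delta'}$ is real, and its fundamental unit $\varepsilon$ supplies a class in $V$ whose cube-root extension is automatically unramified away from $3$. A direct computation of $\tau(\varepsilon)/\varepsilon$ modulo cubes pins this extra class down to the eigenspace associated with the \emph{imaginary} field. Once this identification is made, the tautological inclusion of eigenspaces yields $r_{3}(real) \leq r_{3}(imaginary)$, while the fact that the surplus is accounted for by the single unit-class yields $r_{3}(imaginary) \leq r_{3}(real) + 1$. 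The bulk of the technical work, which I would merely sketch since the result is classical and a full account is given in \cite[Section 10.2]{Wash}, lies in verifying the local conditions at the tamely ramified prime above $3$ that ensure the Kummer group $V$ really does parametrize only unramified descents.
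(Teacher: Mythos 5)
The paper itself offers no proof of Theorem~\ref{Scholz}: it is quoted as a classical result, closed with a $\square$, and the reader is referred to \cite[Section 10.2]{Wash}. So there is no internal argument to compare against line by line; the right benchmark is the cited source, and your plan is in essence the standard Spiegelungssatz argument given there: class field theory converts each $3$-rank into a count of independent unramified cyclic cubic extensions, Kummer theory becomes available over $F = K_{\Delta}(\zeta_{3}) = K_{\Delta'}(\zeta_{3})$, the descent conditions are encoded in eigenspaces of the order-$2$ Galois actions (legitimate since $2$ is invertible in $\mathbb{F}_{3}$), and the unit group supplies the asymmetry of exactly one. Your placement of the fundamental unit is also correct: since $N(\varepsilon) = \pm 1$, its conjugate is $\pm\varepsilon^{-1} \equiv \varepsilon^{-1}$ modulo cubes, so $\varepsilon$ satisfies the descent condition $\sigma(\alpha) \equiv \alpha^{-1}$ for $\sigma$ generating $Gal(F/K_{imaginary})$, i.e.\ it feeds an extra radical class to the \emph{imaginary} side only, which is exactly why the $+1$ sits on that side of the inequality.

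Two points in your sketch are, however, wrong or misleading as stated, and they are precisely where the content of the theorem lies. First, the claimed ``clean splitting $V = V^{+} \oplus V^{-}$ in which one summand has dimension $r_{3}(\Delta)$ and the other dimension $r_{3}(\Delta')$'' cannot be right: if both eigenspaces had those dimensions on the nose there would be nothing left to prove (indeed you would be proving an equality, which is false in general). What the argument actually establishes is that the eigenspace governing descents to a given quadratic field is assembled from the \emph{mirror} field's ideal classes together with the units $\varepsilon$ and $\zeta_{3}$, and the reflection inequalities express the compatibility of these two descriptions; note also that $Gal(F/\mathbb{Q}) \cong (\mathbb{Z}/2)^{2}$ has two distinct lifts of the nontrivial element of $Gal(\mathbb{Q}(\zeta_{3})/\mathbb{Q})$, one for each quadratic subfield, and the two descent conditions use different lifts, so a single involution's $\pm$-eigenspaces do not suffice for the bookkeeping. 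Second, the inequality $r_{3}(real) \leq r_{3}(imaginary)$ is not a ``tautological inclusion of eigenspaces'': the radical space on the real side has an extra direction spanned by $\zeta_{3}$, and one needs the local analysis at the prime above $3$ — where the cubic Kummer layer, when ramified, is \emph{wildly} ramified (only $F/\mathbb{Q}$ is tame there), which is exactly why this step is delicate — to cut that direction out and obtain the bound with no $+1$ on that side. You do flag the local work at $3$ as the bulk of the proof and defer it to \cite[Section 10.2]{Wash}; with that deferral the skeleton is the correct classical one, but the two slips above would have to be repaired before the argument closes.
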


\begin{definition} With notation as in Theorem~\ref{Scholz}, we define as \emph{escalatory} the case where $r_{3}(imaginary) = r_{3}(real) + 1$ and as \emph{non-escalatory} the case where $r_{3}(imaginary) = r_{3}(real)$. \end{definition}

The terms \emph{escalatory} and \emph{non-escalatory} are used for example in \cite[Chapter 4]{Hambleton}. Specifically, in Section 4.10 of \cite{Hambleton}, it is shown that in the case of negative fundamental discriminants $\Delta < 0$, the escalatory case is equivalent to the non-existence of cubic fields of discriminant $3^{4}\Delta$. The opposite is true for $\Delta > 0$ where the non-existence of cubic fields of discriminant $3^{4}\Delta$ is equivalent to the non-escalatory case. Translating these facts to our notation with the discriminants $D$ and $D' = -3D$ as above, we have 
\begin{remark}\label{EscalatoryNoIntegral}\emph{(proof in \cite[\S 4.10]{Hambleton})} 

For $D < 0$, if $r_{3}(D) = r_{3}(D') + 1$, i.e. if we are in the escalatory case, then there are no cubic fields of discriminant $3^{4}D$.

For $D > 0$, if $r_{3}(D) = r_{3}(D')$, i.e. if we are in the non-escalatory case, then there are no cubic fields of discriminant $3^{4}D$.
 \end{remark}
 
Remark~\ref{EscalatoryNoIntegral} is of key importance for Proposition~\ref{NoIntegral} and specifically for the proof that the curves $E_{D'}$ can have no integral points. But first we need a lemma, and the definition of the Fundamental $3$-Descent Map.

\begin{lemma} \label{NotInIm} For any odd fundamental discriminant $D$ coprime to $3$ with mirror discriminant $D'=-3D$, we consider the elliptic curve $E_{D'}:y^{2}=x^{3}+16D'$ and its $\phi$-isogenous $E_{D}: Y^{2}=X^{3}+16\cdot3^{4}D$. If there is an \emph{integral} point $P \in E_{D'}(\mathbb{Z})$, then this point cannot be the image $\hat{\phi}(Q)$ of any point $Q \in E_{D}(\mathbb{Q})$. \end{lemma}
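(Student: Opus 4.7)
My plan is to rule out the existence of such a $Q$ by directly examining the $x$-coordinate of $\hat{\phi}(Q)$ via the explicit formula in (\ref{themapdual}) and showing that its denominator cannot cancel. Suppose for contradiction that $P = \hat{\phi}(Q)$ with $Q = (X,Y) \in E_D(\mathbb{Q})$ non-trivial. Since the curves $E_D$ are in short Weierstrass form with integral coefficients, a rational point has the canonical description $X = u/v^2$, $Y = w/v^3$ with $u,w\in\mathbb{Z}$, $v\in\mathbb{Z}_{>0}$, $\gcd(u,v)=\gcd(w,v)=1$. Substituting into (\ref{themapdual}) gives
\[
x_P \;=\; \frac{u^{3} - 2^{6}\cdot 3^{3}D'\, v^{6}}{9\,u^{2}v^{2}}.
\]

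The first step is to show that $v=1$, so $X$ is already an integer. For $x_P \in \mathbb{Z}$, the integer $v^2$ must divide $u^{3} - 2^{6}\cdot 3^{3}D' v^{6}$, and since $v^2 \mid v^6$, it must divide $u^{3}$. Combined with $\gcd(u,v)=1$ this forces $v=1$. (A cheap side check: $X=0$ is excluded because $(0,\pm 36\sqrt{D})$ is not rational, as $D$ is a non-square fundamental discriminant.)

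The heart of the argument is a $3$-adic valuation check, which exploits the fact that $D'=-3D$ with $\gcd(D,3)=1$, so $v_{3}(D')=1$ and hence $v_{3}(2^{6}\cdot 3^{3}D')=4$. With $X\in\mathbb{Z}$, set $n = v_3(X)$; we need $v_3(9 X^2) = 2 + 2n \leq v_3(X^{3} - 2^{6}\cdot 3^{3}D')$. I will distinguish three cases: if $n=0$ then $v_3(X^3)=0 \neq 4$, so $v_3(X^3 - 2^6\cdot 3^3 D') = 0 < 2$; if $n=1$ then $v_3(X^3)=3 \neq 4$, so the valuation of the difference is $3 < 4 = 2+2n$; and if $n\ge 2$ then $v_3(X^3)\ge 6 > 4$, so the valuation of the difference equals $4 < 2+2n$. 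In every case the required divisibility fails, giving the desired contradiction.

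The argument is essentially a two-line $3$-adic computation once the canonical form of rational points is in hand, so the only place there is anything to be careful about is confirming the exact $3$-adic valuation $v_{3}(D')=1$ (which is built into the hypothesis $D\equiv 2\bmod 3$) and the elementary but crucial observation that $v_{3}(X^{3})\ne v_{3}(2^{6}\cdot 3^{3}D')=4$ whenever $n\ne\tfrac{4}{3}$, i.e.\ always, so the valuation of the difference is simply the minimum. There is no real obstacle; the lemma relies on the mismatch between the power of $3$ in the denominator of $\hat\phi$ and the $3$-adic valuation of $16D'$.
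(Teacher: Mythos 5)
Your proof is correct, and it takes a genuinely different route from the paper's, though both rest on the same arithmetic obstruction: since $D'=-3D$ with $\gcd(D,3)=1$, the constant in the isogeny formula (\ref{themapdual}) has $3$-adic valuation $v_{3}(2^{6}3^{3}D')=4$, which can never equal $v_{3}(X^{3})=3v_{3}(X)$. The paper exploits this through the $y$-coordinate of $\hat{\phi}(Q)$: it writes $Q=(X/Z^{2},Y/Z^{3})$, introduces $d=\gcd(X,Y)$, shows $d\mid 36$ together with the exact prime-power structure ($3^{2}\| d$ and $3^{2}\| Y$ whenever $3\mid d$), and then reads off the contradiction $3^{8}\mid 2^{7}3^{4}Z^{6}DY$ against $3^{6}\| 2^{7}3^{4}Z^{6}DY$ from integrality of the $y$-coordinate. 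You instead use the $x$-coordinate: integrality of $x_{P}$ first forces the denominator $v=1$ (so $X,Y\in\mathbb{Z}$), and then the three-case check on $n=v_{3}(X)$ gives $v_{3}(X^{3}-2^{6}3^{3}D')=\min(3n,4)<2+2n=v_{3}(9X^{2})$ in every case. Your route buys two things: it avoids the gcd decomposition and the $2$-adic analysis altogether (the paper's $2$-adic facts are in fact never used in its final contradiction), and it yields a slightly stronger conclusion --- no $Q\in E_{D}(\mathbb{Q})$ even has $\hat{\phi}(Q)$ with integral $x$-coordinate, whereas the lemma only assumes/needs $P$ integral in both coordinates. Your degenerate-case remarks ($Q=O$ maps to $O$, and $X=0$ is impossible for rational $Q$ since $D$ is not a square) are exactly what is needed to make the formula-based argument complete.
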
 
\begin{proof} Every point $Q \in E_{D}(\mathbb{Q})$ is of the form $Q=(\frac{X}{Z^{2}},\frac{Y}{Z^{3}})$ with $\text{gcd}(X,Z) = \text{gcd}(Y,Z) = 1$ \cite[pp.71-72]{SilvTate}. Let $d \coloneqq \text{gcd}(X,Y)$ and write $X=dX'$ and $Y=dY'$. Obviously $\text{gcd}(X',Y')=1$. As $Q \in E_{D}(\mathbb{Q})$ we obtain
\begin{equation}\label{eqn:PntDiv}
d^{2}Y'^{2}=d^{3}X'^{3}+2^{4}3^{4}Z^{6}D.
\end{equation}
As $D$ is an odd fundamental discriminant coprime to $3$ and $Z$ is coprime to $XY$, if any prime $p \neq 2,3$ divides $d$, then $p^{2} | D$ a contradiction. Hence $d | 36$. 

If $3 | d$ then equation~(\ref{eqn:PntDiv}) implies that $3^{2}||d$, $3 \nmid Y'$, and therefore $3^{2} || Y$, since $X'$ and $Y'$ are coprime and $3 \nmid 2^{4}Z^{6}D$.

Similarly, if $2 | d$ then $2^{2}||d$, $2 \nmid Y'$, and therefore $2^{2} || Y$, since $X'$ and $Y'$ are coprime and $2 \nmid 3^{4}Z^{6}D$.

Let us assume by way of contradiction that $\hat{\phi}(Q) = P = (A , B) \in E_{D'}(\mathbb{Z})$, for some $Q = (\frac{X}{Z^{2}} , \frac{Y}{Z^{3}}) \in E_{D}(\mathbb{Q})$, where $\hat{\phi}$ is given in equation~(\ref{themapdual}). The $y$-coordinate $(\hat{\phi}(Q))_{y} = B \in \mathbb{Z}$ is as follows:
$$(\hat{\phi}(Q))_{y} = \frac{Y}{27X^{3}Z^{3}}(X^{3}-2^{7}3^{4}DZ^{6}) = B \in \mathbb{Z}$$
$$\iff$$
$$YX^{3}-27X^{3}Z^{3}B = 2^{7}3^{4}Z^{6}DY.$$
Therefore, $3^{3} | YX^{3}$. From the equation of $E_{D}$, if either $X$ or $Y$ is divisible by $3$ then they both are. Hence, as we saw above, we should have that $3^{2} || d$ and $3^{2} || Y$. But the previous equation now implies that $3^{8} |  2^{7}3^{4}Z^{6}DY$, a contradiction since $3^{6} || 2^{7}3^{4}Z^{6}DY$.
\end{proof}

Let us come back to the discriminants $D$ and $D'$. There is a direct relation between the $3$-virtual units of $\mathcal{O}_{D'} \setminus \mathcal{O}_{D'}^{3}$ and the group of rational points of $E_{D'}$ established via the Fundamental $3$-Descent Map (e.g. \cite[Section 8.4.2]{Cohen}). Specifically to our curves $E_{D'}$, the Fundamental $3$-descent map $\Psi$  is defined as follows
\begin{equation}\label{DescentMap} \Psi: E_{D'}(\mathbb{Q}) \rightarrow Ker(N_{K_{D'}} : K_{D'}^{\times}/ {K_{D'}^{\times}}^{3} \rightarrow  \mathbb{Q}^{\times}/ {\mathbb{Q}^{\times}}^{3}),\end{equation}
$$(Q_{x} , Q_{y}) \mapsto (Q_{y} + 4\sqrt{D'}) \cdot {K_{D'}^{\times}}^{3}.$$
This map $\Psi$ is a group homomorphism with kernel equal to $\hat{\phi}(E_{D}(\mathbb{Q}))$ (e.g. \cite[Proposition 8.4.8]{Cohen}. 

\begin{remark}\label{rmrk:ImageNotCube}
Lemma~\ref{NotInIm} implies that the image of any integral point of $E_{D'}(\mathbb{Q})$ under the Fundamental $3$-Descent Map $\Psi$, is not a $K_{D'}^{\times}$-rational cube since it does not belong to the image of $\hat{\phi}$ (and therefore its image under $\Psi$ does not vanish). Recall that the cures $E_{D}$ and $E_{D'}$ do not have any non-trivial $\mathbb{Q}$-rational torsion points.
\end{remark}

\begin{proposition} \label{NoIntegral} We let $D$ be any odd fundamental discriminant satisfying $D \equiv 2 \bmod 3$, and we consider the elliptic curves $E_{D'}: y^{2}=x^{3}+16D'$. 

For $D<0$ whose corresponding quadratic number fields $K_{D}$ and $K_{D'}$ satisfy $r_{3}(D) = r_{3}(D') + 1$, the elliptic curves $E_{D'}$ have no integral points. 

For $D>0$ whose corresponding quadratic number fields $K_{D}$ and $K_{D'}$ satisfy $r_{3}(D) = r_{3}(D')$, the elliptic curves $E_{D'}$ have no integral points.
\end{proposition}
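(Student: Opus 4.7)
The rank-parity assertions are immediate from Corollary~\ref{oddrank}, so the substance of the proposition lies in the non-existence of integral points. I would prove this by contradiction. Assume $P = (x_{0}, y_{0}) \in E_{D'}(\mathbb{Z})$ exists and set $\mu := y_{0} + 4\sqrt{D'} \in \mathcal{O}_{D'}$, with conjugate $\bar\mu = y_{0} - 4\sqrt{D'}$. The defining equation $y_{0}^{2} = x_{0}^{3} + 16 D'$ gives $\mu\bar\mu = x_{0}^{3}$ and $\mu + \bar\mu = 2 y_{0}$, so in particular $(\mu\bar\mu)^{1/3} = x_{0} \in \mathbb{Z}$.

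The plan is to combine two facts about $\mu$. First, since the image of the Fundamental $3$-Descent Map lies inside $V_{3}(K_{D'})/(K_{D'}^{\times})^{3}$, by possibly multiplying by a cube in $K_{D'}^{\times}$ we may take $\mu$ to be a $3$-virtual unit, so $(\mu) = \mathfrak{a}^{3}$ for an integral ideal $\mathfrak{a}$ of $\mathcal{O}_{D'}$. Second, Lemma~\ref{NotInIm} and Remark~\ref{rmrk:ImageNotCube} give $\Psi(P) \neq 0$, i.e.\ $\mu \notin (K_{D'}^{\times})^{3}$. Hence $\mu$ represents a non-trivial class in $V_{3}(K_{D'})/(K_{D'}^{\times})^{3}$, and after a further adjustment by cubes we may assume $\mu \in \mathcal{O}_{D'} \setminus \mathcal{O}_{D'}^{3}$ is primitive in the sense of Definition~\ref{Primitive}.

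I would then invoke the correspondence cited in the excerpt (Lemma~4.2 and Theorem~4.3 of \cite{Hambleton}): such a $\mu$ generates, via the polynomial
\[
f_{\mu}(x) = x^{3} - 3(\mu\bar\mu)^{1/3} x + (\mu+\bar\mu) = x^{3} - 3 x_{0} x + 2 y_{0}
\]
put in standard form, an irreducible cubic field $L$ whose Galois closure contains $K_{D}$. A direct computation yields
\[
\mathrm{disc}(f_{\mu}) = -4(-3x_{0})^{3} - 27(2 y_{0})^{2} = -108 \cdot 16 \, D' = 2^{6} \cdot 3^{4} \cdot D,
\]
and the factor $2^{6}$ is absorbed by $[\mathcal{O}_{L} : \mathbb{Z}[\theta]]^{2}$ upon reduction to standard form, leaving $L$ of fundamental discriminant $3^{4} D$. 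This directly contradicts Remark~\ref{EscalatoryNoIntegral}: under either hypothesis on the $3$-ranks $r_{3}(D)$ and $r_{3}(D')$, no cubic field of discriminant $3^{4} D$ exists, completing the argument.

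The main obstacle will be the bookkeeping at the prime $2$: one must confirm that after replacing $\mu$ by a primitive representative and standardising $f_{\mu}$, the fundamental discriminant of $L$ is exactly $3^{4} D$ (which is what Remark~\ref{EscalatoryNoIntegral} forbids) rather than $D$, the other value permitted by the correspondence when $\gcd(3,D)=1$. A parity analysis of $y_{0}^{2} = x_{0}^{3} + 16 D'$ with $D'$ odd shows that either $x_{0}, y_{0}$ are both odd or $4 \mid x_{0}$ and $4 \mid y_{0}$, which supplies the expected divisibility to identify the correct index $[\mathcal{O}_{L} : \mathbb{Z}[\theta]]$; pinning down this index (and hence the fundamental discriminant) precisely is the delicate step, and I would resolve it by tracking the image of $\mu$ under the Scholz reflection isomorphism, noting that the factor $4\sqrt{D'}$ in $\mu$ forces the non-trivial ramification at $3$ characteristic of the escalatory/non-escalatory discriminant $3^{4} D$.
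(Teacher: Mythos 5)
Your proposal follows the same skeleton as the paper's proof (the descent image $y_{0}+4\sqrt{D'}$, non-vanishing via Lemma~\ref{NotInIm} and Remark~\ref{rmrk:ImageNotCube}, the Hambleton--Williams virtual-unit/cubic-field correspondence, and the final contradiction with Remark~\ref{EscalatoryNoIntegral}), but both pivotal steps have genuine gaps. First, your assertion that the image of the Fundamental $3$-Descent Map lies inside $V_{3}(K_{D'})/(K_{D'}^{\times})^{3}$ is unjustified, and as a general statement it is false: the image of $\Psi$ lies only in the kernel of the norm map, and an element whose norm is a cube need not be a virtual unit modulo cubes, since its ideal can involve primes above $2$ and $3$ and divisors of $D'$ to exponents not divisible by $3$. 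What makes the class a virtual-unit class here is precisely the integrality of $P$: one checks that $\gcd\bigl((\mu),(\bar{\mu})\bigr)$ can only be supported above $2$ (using that $D'$ is squarefree and $3\,||\,D'$), and then runs the $2$-adic case analysis (either $x_{0},y_{0}$ both odd, or $4\mid x_{0}$ and $4\,||\,y_{0}$) to exhibit an integral representative that is a virtual unit. This is exactly the content of the paper's Cases (a) and (b) together with \cite[Proposition 4.1(1)]{Hambleton} applied to the explicit elements $\lambda=(y_{0}/4+\sqrt{D'})/2$, respectively $\Lambda=y_{0}+4\sqrt{D'}$.

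Second, and more seriously, your treatment of the dichotomy between field discriminant $D$ and $3^{4}D$ does not close. You first ``adjust $\mu$ by cubes'' to make it primitive, but then compute with the polynomial $x^{3}-3x_{0}x+2y_{0}$ attached to the \emph{original} representative; Hambleton's Theorem 4.4 requires primitivity and the polynomial data for the \emph{same} element, and in Case (a) the original $\mu=y_{0}+4\sqrt{D'}=2^{3}\lambda$ is not primitive. Worse, once arbitrary cube adjustments are allowed, the associated cubic field can perfectly well have discriminant $D$ rather than $3^{4}D$ --- this happens exactly when the $\sqrt{D'}$-coefficient of the chosen representative is divisible by $3$ --- and in the escalatory case for $D<0$ cubic fields of discriminant $D$ \emph{do} exist (there $r_{3}(D)\geq 1$), so no contradiction with Remark~\ref{EscalatoryNoIntegral} would follow. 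The crux is therefore to verify that the specific primitive representative coming from the integral point has $\sqrt{D'}$-coefficient prime to $3$ (it is $1$, respectively $8$, in the two parity cases, using $3\nmid x_{0}y_{0}$, which holds because $3\,||\,16D'$), so that the polynomial discriminant is $3^{4}b^{2}D$ with $3\nmid b$ and \cite[Theorem 4.4]{Hambleton} yields field discriminant exactly $3^{4}D$. Your proposed resolution --- ``tracking the image of $\mu$ under the Scholz reflection isomorphism'' and asserting that ``the factor $4\sqrt{D'}$ forces the non-trivial ramification at $3$'' --- is a hope, not an argument; the paper settles this step by exhibiting the explicit primitive virtual units in each parity case, checking standard form at all primes, and then invoking Theorem 4.4.
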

\begin{proof} We assume that we are in the escalatory case for $D < 0$ (respectively, that we are in the non-escalatory case for $D > 0$). Let us assume by way of contradiction that an elliptic curve $E_{D'}$ in this subfamily has an \emph{integral} point $P = (A , B) \in E_{D'}(\mathbb{Z})$. Plugging the point back in $E_{D'}$ and modifying the coefficients a little we obtain
$$3^{4}D = 4(\frac{3A}{4})^{3} - 27(\frac{B}{4})^{2}.$$

Case~(a): From the equation of $E_{D'}$, we see that if either $A$ or $B$ is even then so must be the other one, since $B^{2} = A^{3} - 3\cdot16D$. From the same equation we see that $B$ must actually be divisible by $4$ exactly, since $2 \nmid 3D$. Let $A=2a'$ and $B=4b$. Cancelling out the $16$ we end up with $2a'^{3} = b^{2} + 3D$. Since $D\equiv 1 \bmod 4$ and $b$ is odd, we have a contradiction unless $2 | a'$. Therefore, if both $A$ and $B$ are even then we must have that $4 ||B$ and $4 | A$, and we can write $A=2a'=4a$. This implies that the monic polynomial 
$$g(x) = x^{3} - \frac{3A}{4}x + \frac{B}{4} = x^{3} - 3ax +b$$ 
is in $\mathbb{Z}[x]$ and is of discriminant exactly $3^{4}D$. 

Consider the element $\lambda = \frac{\frac{B}{4}+\sqrt{D'}}{2} = \frac{b+\sqrt{D'}}{2} \in \mathcal{O}_{D'}$. We see that 
$$\lambda \bar{\lambda} = (\frac{A}{4})^{3} = a^{3}$$ and 
$$\lambda + \bar{\lambda} = \frac{B}{4} = b.$$ 
Given the polynomial $g(x) \in \mathbb{Z}(x)$ above, \cite[Proposition 4.1(1)]{Hambleton} implies that $\lambda$ is a $3$-virtual unit.

Denote by $\Lambda$ the element of $K_{D'}^{\times}$:
$$\Lambda = 2^{3}\lambda = \frac{2B + 8\sqrt{D'}}{2} = B + 4\sqrt{D'} \in K_{D'}^{\times}.$$ 
We recognise that $\Lambda$ is the image of the integral point $P=(A,B)$ under the Fundamental $3$-Descent Map $\Psi$ that we defined in (\ref{DescentMap}). Since by \cite[Proposition 8.4.8]{Cohen} 
the kernel of $\Psi$ is precisely $\hat{\phi}(E_{D}(\mathbb{Q}))$, then by Lemma~\ref{NotInIm} and Remark~\ref{rmrk:ImageNotCube}, $\Psi(P)$ does not vanish and $\Psi(P) = \Lambda \equiv \lambda \in K_{D'}^{\times}/(K_{D'}^{\times})^{3}$. Then, by \cite[Proposition 4.1 (2)]{Hambleton}, $g(x)$ is irreducible over $\mathbb{Q}$. Finally, since $\lambda$ is a primitive $3$-virtual unit, \cite[Theorem 4.4]{Hambleton} implies that $g(x)$ generates a cubic field of discriminant $D$ or $3^{4}D$. To decide which one, we need to show that the coefficients $3a$ and $b$ of $g(x)$ satisfy certain congruence conditions:
\begin{enumerate}[label=(\roman*)]
\item\label{one} If $3$ divides either  $A$ or $B$, then $9 | 16D'$ which is impossible. Furthermore, $b$ is odd and $D$ is squarefree. Therefore, we easily deduce that $g(x)$ is in standard form with respect to all primes.
\item\label{two} Observe that the equivalence 
$$6 \equiv 3D = 4a^{3}-b^{2} \bmod 9$$ implies immediately that $a \not \equiv 2 \bmod 3$. Hence 
$$a \equiv 1 \bmod 3 \ \ \text{and therefore} \ \ 3a \equiv 3 \bmod 9.$$
\item\label{three} We observe that $6 \equiv 4a^{3}-b^{2} \bmod 9 \iff b^{2} \equiv 4a^{3} - 6 \bmod 9$. If now we assume, by way of contradiction, that $b^{2} \equiv 4 \bmod 9$ then the above equivalence gives $4a^{3} - 10 \equiv 0 \bmod 9$, which is impossible. Therefore we must have that $b^{2} \not \equiv 4 \bmod 9$.
\end{enumerate}
Given \ref{one}, \ref{two} and \ref{three} of Case~(a) above, we deduce from \cite[Theorem 2.14]{Hambleton} that $g(x)$ generates a cubic field of discriminant $3^{4}D$. And now, this leads to a contradiction since we assumed that we are in the escalatory case for $D < 0$ (respectively, non-escalatory case for $D > 0$). 

Case~(b): Both $A$ and $B$ are odd. Then $g(x) \notin \mathbb{Z}[x]$ but the following polynomial $f(x)$ does have integer coefficients and it is of discriminant $2^{6}3^{4}D$: 
$$f(x) = x^3 - 3Ax + 2B \in \mathbb{Z}[x].$$ 
As above, let $\lambda = \frac{B/4 + \sqrt{D'}}{2}$. Now $\lambda$ is not integral but we observe that
 $$\Lambda = 2^{3}\lambda = \frac{2B + 8\sqrt{D'}}{2} = B + 4\sqrt{D'} = \Psi(P) \in K_{D'}^{\times}/(K_{D'}^{\times})^{3},$$ and $\Psi(P)$ does not vanish by Lemma~\ref{NotInIm} and Remark~\ref{rmrk:ImageNotCube}. Since $\Lambda \bar{\Lambda} = A^{3}$ and $\Lambda + \bar{\Lambda} = 2B$, by \cite[Proposition 4.1, (1) and (2)]{Hambleton}, $\Lambda$ is a $3$-virtual unit and $f(x)$ is irreducible. Furthermore, since $B$ is odd, $\Lambda$ is a primitive $3$-virtual unit and again, by \cite[Theorem 4.4]{Hambleton}, $f(x)$ generates a cubic field of discriminant $D$ or $3^{4}D$. As above, we need to examine whether the same congruence conditions hold, this time for the coefficients $3A$ and $2B$ of the polynomial $f(x)$:
 \begin{enumerate}[label=(\roman*)]
 \item\label{bone} As $gcd(6,AB) = 1$ and $D$ is squarefree, the polynomial $f(x)$ is in standard form with respect to all primes.
 \item\label{btwo} We observe that $A^{3} - B^{2} = 3\cdot 16 D \equiv 6 \bmod 9$. Therefore, the same rational as in Case~(a)\ref{two} above forces $A$ to be equivalent to $1$ modulo $3$, which in turn gives $3A \equiv 3 \bmod 9$.
 \item\label{bthree} Regarding the constant coefficient $2B$ of $f(x)$, we see that in this Case~b we have $$A^{3} - B^{2} = 3 \cdot 16 D \equiv 6 \bmod 9 \iff  B^{2} \equiv A^{3} - 6 \equiv 0 \bmod 9$$
 $$\iff (2B)^{2} \equiv 4A^{3} - 6 \bmod 9.$$ 
But as in Case~(a)\ref{three} above, we see that $(2B)^{2} \not \equiv 4 \bmod 9$ since $4A^{3} \equiv 10 \bmod 9$ has no solutions.
 \end{enumerate}
 Given \ref{one}, \ref{two} and \ref{three} of Case~(b) above, we deduce from \cite[Theorem 2.14]{Hambleton} that $f(x)$ generates a cubic field of discriminant $3^{4}D$. And now, this leads to a contradiction since we assumed that we are in the escalatory case for $D < 0$ (respectively, non-escalatory case for $D > 0$). 
 \end{proof}

 \begin{corollary}\label{cor:oddranknoint}
 Assuming finiteness of the $3$-primary part of the Tate-Shafarevich group, for every odd and negative fundamental discriminant $D \equiv 2 \bmod 3$ whose corresponding quadratic number fields $K_{D}$ and $K_{D'}$ satisfy $r_{3}(D) = r_{3}(D') + 1$, the elliptic curves $E_{D'}: y^{2}=x^{3}+16D'$ have odd rank and no integral points. \end{corollary}
 \begin{proof}
Immediate from Corollary~\ref{oddrank} and Proposition~\ref{NoIntegral}.
 \end{proof}

\section{A parametrised family of elliptic curves $E_{D'}$ with odd rank and no integral points}\label{Example}
For any $w \in \mathbb{N}$, with $w \equiv 1 \bmod 4$ and $w \equiv 0 \bmod 3$, let us define the negative integer
$$D(w) = -(2^{4}3^{7}w^{2} + (2+2^{4}3^{7})w + 2^{2}3^{7}+1) = - f(w).$$
We observe that 
$D(w) \equiv 1 \bmod 4$ and $D(w) \equiv 2 \bmod 3$.

The restrictions on $w$ imply that $w \equiv 9 \bmod 12$. Let us write $w = 9 + 12k$, for $k \in \mathbb{N}$. The quadratic polynomial $f(w) = f(9+12k) \eqqcolon F(k)$ becomes
$$F(k) = 2^{8}3^{9}k^{2} + 2^{3}\cdot3(1 + 2^{3}3^{7} + 2^{4}3^{9})k + 2\cdot3^{2}(1 + 2\cdot3^{5} + 2^{3}3^{8} + 2^{3}3^{9}) + 1.$$ 

\begin{lemma}
With notation as above, the following set of odd negative fundamental discriminants is infinite:
$$\mathcal{D}_{w} = \{ D(w) \ | \ w \equiv 9 \bmod 12 \ \text{and} \ D(w) \ \text{squarefree} \}.$$   
\end{lemma}
\begin{proof}
The quadratic polynomial $F(k)$ has a positive leading coefficient and it is straightforward to infer that $F(k) \neq (ak+b)^{2}$ for any $a,b \in \mathbb{Z}$. Therefore, by Erd\"os's remark in \cite[pp.1-2]{Erdos}, there are infinitely many $k \in \mathbb{N}$ for which $F(k)$ is squarefree. As a result, there are infinitely many $w \in \mathbb{N}$ for which $f(w)$ is squarefree and therefore $\mathcal{D}_{w}$ is an infinite parametrised set of odd negative fundamental discriminants satisfying $D(w) \equiv 2 \bmod 3$.
\end{proof}

It is a straightforward computation to see that the point 
$$P(w) = (1/9 , y(w)/27), \ \text{where} \ y(w) = 2^{4}3^{7}w+2^{3}3^{7}+1,$$ belongs to 
$E_{D(w)'}(\mathbb{Q})$ for any integer $w$. When additionally $D(w) \in \mathcal{D}(w)$, we show below that this point belongs to the set $E_{D(w)'}(\mathbb{Q}) \setminus \hat{\phi}(E_{D(w)}(\mathbb{Q}))$, which implies in particular that $r(E_{D(w)'}) \geq 1$ unconditionally.

\begin{proposition}\label{prop:InfOrder}
For every $D(w) \in \mathcal{D}(w)$, consider the point $$P(w) = (1/9 , y(w)/27), \ \text{with} \ y(w) = 2^{4}3^{7}w+2^{3}3^{7}+1.$$
The point $P(w)$ belongs to the set $E_{D(w)'}(\mathbb{Q}) \setminus \hat{\phi}(E_{D(w)}(\mathbb{Q}))$, and therefore the corresponding elliptic curve $E_{D(w)'}/\mathbb{Q}$ has positive rank.
\end{proposition}
\begin{proof}
It is a straightforward computation to see that $P(w) \in E_{D(w)'}(\mathbb{Q})$, where as above $E_{D(w)'}: y^{2} = x^{3} - 3\cdot 16D(w).$
Now assume by way of contradiction that $P(w) = \hat{\phi}(Q)$, for some $Q=(X/Z^{2},Y/Z^{3}) \in E_{D(w)}(\mathbb{Q})$. Then, equation~\ref{themapdual} implies that 
$$1/9 = \frac{(X/Z^{2})^{3} - 2^{6}3^{3}(-3D(w))}{9(X/Z^{2})^{2}} \iff Z^{2}X^{2} = X^{3} + 2^{6}3^{4}D(w)Z^{6}.$$
Since $gcd(X,Z) = 1$ this implies that $Z=1$ and $X^{2}(X-1) = 2^{6}3^{4}|D(w)|$.
Now, as $X^{2}$ must divide the right hand side, since $gcd(6,D(w))=1$ and $D(w)$ is squarefree, this implies that $1 < X \ | \ 2^{3}3^{2}$.

If $2 \nmid X$, since $X > 1$ we must have $3 | X$ and hence $3 \nmid X - 1$, in which case $X = 3^{2}$ and $X-1 = 2^{6}|D(w)| < X = 9$, a contradiction.

Similarly, if $3 \nmid X$, since $X > 1$ we must have $2 | X$ and hence $2 \nmid X - 1$, in which case $X = 2^{3}$ and $X-1 = 3^{4}|D(w)| < X = 8$, again a contradiction.  

Finally, if both $2$ and $3$ divide $X$ then neither can divide $X-1$ and we have $X = 2^{3}3^{4}$ and $X-1 = |D(w)| = 2^{3}3^{4} - 1 = 71$. But this would imply that $D(w) = -71$ which is a contradiction because, even though $-71$ is a fundamental discriminant, it is not equivalent to $2 \bmod 3$.

We have shown that $P(w) \neq \hat{\phi}(Q)$ for any $Q \in E_{D(w)}(\mathbb{Q})$. Equation~\ref{R} now implies that $1 \leq \text{dim}_{\mathbb{F}_{3}}(E_{D(w)'}(\mathbb{Q})/\hat{\phi}(E_{D(w)}(\mathbb{Q})) \leq r(E_{D(w)'})$.
\end{proof}

\begin{corollary}\label{cor:uncond}
Let $\mathcal{D}_{w}^{esc} = \{ D(w) \ | \ r_{3}(D(w)) = r_{3}(D(w)') + 1 \}$. For every $D(w) \in \mathcal{D}_{w}^{esc}$, the corresponding elliptic curve $E_{D(w)'}$ has infinitely many rational points but no integral points.
\end{corollary}
\begin{proof} Immediate from Propositions~\ref{NoIntegral}~\&~\ref{prop:InfOrder}.
\end{proof}

Finally, as also discussed in the Introduction, the set $\mathcal{D}_{w}^{esc}$ is not empty. We performed a straightforward search in PARI/GP for $1 \leq w \leq 10^{5}$ with $w \equiv 9 \bmod 12$ and $D(w)$ squarefree. The output was $4,842$ such discriminants $D(w)$ that lie within the bounds
$$D(9) = -3158047 \leq D(w) \leq D(99993) = -349874512078399.$$
According to Corollary~\ref{cor:uncond}, these discriminants yield $4,842$ corresponding elliptic curves $E_{D(w)'}$ with non-trivial rank and no integral points.

\section*{Acknowledgements} The author would like to thank Antoine Joux and the CISPA Helmholtz Center for Information Security, Pieter Moree and the Max Planck Institute for Mathematics in Bonn, and Karl Jansen and the Deutsches Elektronen-Synchrotron DESY-Zeuthen, for their hospitality and support while working on this paper.

\begin{figure}[!b]
   \includegraphics[width = 0.12\textwidth]{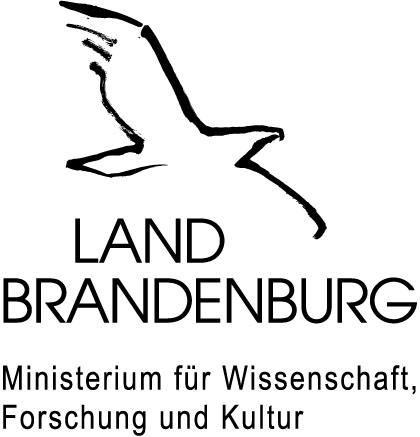}
\end{figure}

\end{document}